\documentclass[12pt]{article}
\usepackage{amsmath,amsthm,amssymb,color,amscd}

\marginparwidth 40pt
 \textwidth=140mm
 \textheight=210mm

\def\seq#1#2#3{#1_{#2},\,\ldots,#1_{#3}}

\def\w{\widetilde}

\def\b{\overline}

\def\vv{{\underline{v}}}

\def\nunu{\underline{\nu}}
\def\tt{{\underline{t}}}

\def\mm{\underline{m}}
\def\kk{\underline{k}}

\def\1{\underline{1}}
\def\0{\underline{0}}
\def\R{\mathbb R}
\def\P{\mathbb P}

\def\LL{{\cal L}}

\def\Z{\mathbb Z}

\def\C{\mathbb C}
\def\K{\mathbb K}

\def\OO{{\mathcal O}}
\def\EE{{\mathcal E}}

\newtheorem{theorem}{Theorem}

\newtheorem{lemma}{Lemma}
\newtheorem{proposition}{Proposition}

\theoremstyle{definition}

\newtheorem{remark}{Remark}

\title{On real analogues of the Poincar\'e series
\footnote{Math. Subject Class. 2020:
16W60, 14B05, 14P15.
Keywords: Poincar\'e series, germs of real functions, plane valuation.
}
}

\author{
A.~Campillo,
\and F.~Delgado, 
\and S.M.~Gusein-Zade\thanks{
The work of the third author
is an output of a research project implemented as part of the Basic Research Program at the National Research University Higher School of Economics (HSE University).
} }

\date{}
\begin{document}

\def\eps{\varepsilon}

\maketitle

\begin{abstract}
There exist several equivalent equations for the Poincar\'e series of a collection of valuations on the ring of germs of functions on a complex analytic variety. We give
definitions of the Poinca\'e series of a collection of valuations in the real setting (i.\,e., on the ring of germs of functions on a real analytic variety), compute them for the case of one curve valuation on the plane and discuss some of their properties.
\end{abstract}

\section{Introduction}\label{sec:Introduction}
Let $\K$ be either the field $\C$ of complex numbers
or the field $\R$ of real numbers, let $(V,0)$ be a germ of an analytic variety over $\K$, and let $\EE_{V,0}$ be the
ring of germs of functions on $V$ (if $\K=\C$, $\EE_{V,0}$
is usually denoted by $\OO_{V,0}$).
A function
$\nu:\EE_{V,0}\to\Z_{\ge 0}\cup{\{+\infty\}}$
is called an {\em order function} on $\EE_{V,0}$ if
\begin{enumerate}
\item[1)] $\nu(0)=+\infty$;
\item[2)] $\nu(\lambda f)=\nu(f)$ for $f\in \EE_{V,0}$,
$\lambda\in\K^*:=\K\setminus\{0\}$;
\item[3)] $\nu(f_1+f_2)\ge\min\{\nu(f_1),\nu(f_2)\}$ for $f_1,f_2\in \EE_{V,0}$.
\end{enumerate}
If besides that one has
$$
\nu(f_1f_2)=\nu(f_1)+\nu(f_2)\,,
$$
the function $\nu$ is called a {\em valuation}. (Sometimes for a valuation one demand that $\nu(f)\ne+\infty$ for $f\ne 0$.
In this case a function described above is called a pre-valuation.)

Let $\{\nu_i\}$, $i=1, 2, \ldots, r$, be a collection of order functions on $\EE_{V,0}$. The Poincar\'e series of the collection $\{\nu_i\}$ was defined in~\cite{CDK}.
For $\mm=(m_1,\ldots,m_r)$ and $\mm'=(m_1',\ldots,m_r')$
from $\Z^r$, one says that $\mm\ge\mm'$ if $m_i\ge m_i'$ for all $i$.
For $f\in\EE_{V,0}$, let $\nunu(f):=(\nu_1(f),\ldots,\nu_r(f))$.
For $\kk\in\Z^r$, let $J_{\kk}:=\{f\in\EE_{V,0}:\nunu(f)\ge\kk\}$.
Let
$$
\LL_{\{\nu_i\}}(\tt):=\sum_{\kk\in\Z^r}
\dim\left({J_{\kk}/J_{\kk+\1}}\right)\cdot\tt^{\kk},
$$
where $\tt=(t_1,\ldots,t_r)$,
$\1=(1, \ldots, 1)$.
(Pay attention that the sum is over all $\kk\in\Z^r$ and therefore the series $\LL_{\{\nu_i\}}(\tt)$ contains
summands with negative exponents (at least for $r>1$).)
The {\em Poincar\'e series} of the collection $\{\nu_i\}$
is defined by
\begin{equation}\label{eqn:Poincare_ini}
P_{\{\nu_i\}}(\tt)=\frac{\LL_{\{\nu_i\}}(\tt)\cdot\prod_{i=1}^r(t_i-1)}
{(t_1\cdot\ldots\cdot t_r-1)}\,.
\end{equation}

For one curve valuation $\nu=\nu_C$ on a complex analytic variety $(V,0)$ ($\K=\C$, $(C,0)$ is a curve germ on $(V,0)$) this definition gives the following. Let $S_C\subset\Z_{\ge 0}$ be the set (a semigroup) of values of the valuation $\nu$. Then
\begin{equation}\label{eqn:semigroup_one_curve}
 P_{\{\nu\}}(t)=\sum_{v\in S_C}t^v.
\end{equation}

In~\cite{Duke}, it was shown that, for $\K=\C$
and for a collection $\{\nu_i\}$ of curve valuations on the algebra $\OO_{\C^2,0}$ of function germs in two variables
defined by the irreducible components $(C_i,0)$ of a plane curve germ $(C,0)$, the Poincar\'e series $P_{\{\nu_i\}}(\tt)$ coincides with the Alexander polynomial (in several
variables) of the link $C\cap S_{\varepsilon}^3\subset S_{\varepsilon}^3$. In particular, for an embedded resolution
$\pi:(X,D)\to(\C^2,0)$ of the curve $C$, one has an
``A'Campo type'' representation
\begin{equation}\label{eqn:A'Campo}
P_{\{\nu_i\}}(\tt)=
\prod_{\sigma}(1-\tt^{\mm_{\sigma}})^{-\chi(\stackrel{\circ}{E}_{\sigma})},
\end{equation}
where $\mm_{\sigma}\in\Z_{>0}^r$ are some multiplicities defined for the
components $E_{\sigma}$ of the exceptional divisor
$D=\pi^{-1}(0)$ and $\stackrel{\circ}{E}_{\sigma}$ is
the ``smooth part'' of the component $E_{\sigma}$, that is
$E_{\sigma}$ itself minus the intersection points with
all other components of the total transform $\pi^{-1}(C)$
of the curve $C$.

The computation of the Poincar\'e series in~\cite{Duke}
was based on its relation with the so-called extended
semigroup of a collection of (curve) valuations.
For a collection $\{\nu_i\}$ of valuations on the algebra
$\EE_{V,0}$ its {\em extended semigroup}
$\widehat{S}_{\{\nu_i\}}$ is the
union over $\vv\in\Z_{\ge0}^r$ of the {\em fibres}
$$
F_{\vv}:=\left(J(\vv)/J(\vv+\1)\right)\setminus
\bigcup_{I\subset I_0,\,I\ne \emptyset}
\left(J(\vv+\1_I)/J(\vv+\1)\right)\,,
$$
where $I_0=\{1,\ldots,r\}$, the $i$th component of
$\1_I\in\Z^r$ is equal to $1$ if $i\in I$ and to $0$
otherwise. The semigroup structure on $\widehat{S}_{\{\nu_i\}}$ is induced by the product of function germs.
In~\cite{Duke}, it was shown that (for $\K=\C$) the Poincar\'e series of the collection $\{\nu_i\}$ is given by the equation
\begin{equation}\label{eqn:Poincare_through_semigroup}
 P_{\{\nu_i\}}(\tt)=
 \sum_{\kk\in\Z_{\ge 0}^r}\chi(\P F_{\kk})\cdot \tt^{\kk},
\end{equation}
where $\P F_{\kk}$ is the projectivization $F_{\kk}/\C^*$
of the fibre of the extended semigroup.
(Here we always have in mind the {\em additive Euler characteristic} defined as the
alternating sum of the ranks of the cohomology groups with compact support. In fact, for
a space being a complex quasiprojective variety (in particular for the projectivizations of the fibres of the extended semigroup in the complex case: $\K=\C$),
this Euler characteristic coincides with the ``usual'' one.)

In~\cite{RMS} (also for $\K=\C$), the Poincar\'e series $P_{\{\nu_i\}}(\tt)$
was expressed in terms of the integral with respect to the
Euler characteristic (appropriately defined) over the projectivisation of ring $\OO_{V,0}$:
\begin{equation}\label{eqn:Poincare_as_integral}
 P_{\{\nu_i\}}(\tt)=\int_{\P\OO_{V,0}}\tt^{\,\nunu(f)}d\chi\,.
\end{equation}
This equation led to a new, relatively simple computation of
the Poincar\'e series in a number of situations.

In the complex setting all three equations of the Poincare series:~(\ref{eqn:Poincare_ini}), (\ref{eqn:Poincare_through_semigroup}), and (\ref{eqn:Poincare_as_integral}) (and the equation~(\ref{eqn:semigroup_one_curve}) for the case of one curve valuation) --- are equivalent to each other. For collections of curve or divisorial valuations on $\OO_{\C^2,0}$
the Poincar\'e series has a cyclotomic form, i.\,e., is product|ratio of powers of binomials of the form $(1-\tt^{\mm})$ like in (\ref{eqn:A'Campo}). Here we give
analogues of definitions of the Poinca\'e series of a collection of valuations (or of order functions) in the real setting based on these equations. The definitions appear to be different.
For the case of one curve valuation on the plane we compute
three of these Poincar\'e series. (All three of them appear
to be cyclotomic.)

\section{Analogues of the Poincar\'e series in the real setting}\label{sec:Definitions}
Let $(V,0)$ be a germ of a real analytic variety,
let $(V_\C,0)$ be its complexification and let $\{\nu_i\}$,
$i=1,\ldots, r$, be a collection of valuations (or of
order functions) on the ring $\OO_{V_\C,0}$
of germs of complex analytic functions on $(V_\C,0)$.
The valuations $\nu_i$ define valuations (denoted in the same way) on the ring $\EE_{V_,0}$
of germs of real analytic functions on $(V,0)$.
(Each valuation on $\EE_{V_,0}$ is the restriction of a valuation on
$\OO_{V_\C,0}$: see, e.g., \cite[Chapter 4, Theorem 1]{Rib}.)
Taking into account Equations~(\ref{eqn:Poincare_ini}), (\ref{eqn:Poincare_through_semigroup}),
and (\ref{eqn:Poincare_as_integral}) for the Poincar\'e series in the complex setting, one can consider some (different) analogues of it
in the described situation.

\begin{enumerate}
 \item[1)] Let $P_{\{\nu_i\}}(\tt)$ be the {\em classical Poincar\'e series} defined by Equation~(\ref{eqn:Poincare_ini}),
 where $J(\vv)=\{f\in\EE_{V,0}: \nunu(f)\ge\vv\}$.
 The space $J(\vv)/J(\vv+\1)$
 is a (real) subspace of the complex vector space
 $J_{\C}(\vv)/J_{\C}(\vv+\1)$,
 where $J_{\C}(\vv)=\{f\in\OO_{V_{\C},0}: \nunu(f)\ge\vv\}$.
 Therefore it is the direct sum of
 a complex vector subspace of
 $J_{\C}(\vv)/J_{\C}(\vv+\1)$
 and of a ``purely real'' one.
 In the case of one curve valuation $\nu$ on $\OO_{V_\C,0}$
 all the coefficients of the series $P_\nu(t)$ are equal to
 $0$, $1$, or $2$ (if the subspace $J(\vv)/J(\vv+\1)$ is
 zero-dimensional, one-dimensional over $\R$, or one-dimensional
 over $\C$ respectively).

 \item[2)] Let the {\em real Poincar\'e series}
 $P^{\R}_{\{\nu_i\}}(\tt)$ be defined as the integral with
 respect to the Euler characteristic of $\tt^\vv$ over the
 projectivization (i.\,e., the quotient by $\R^*$) of the extended semigroup:
 Equation~(\ref{eqn:Poincare_through_semigroup}). In the case of one curve valuation on $\OO_{V_\C,0}$
 all the coefficients of $P^{\R}_\nu(t)$ are equal to
 $0$ or $1$. (One gets this series from $P_{\nu}(t)$
 substituting all the monomials with the coefficient $2$
 by $0$ (equal to the Euler characteristic of $\R\P^1$).

 \item[3)] One has an analogue version defined
 as the integral with
 respect to the Euler characteristic (appropriartelly defined) of $\tt^{\,\nunu(f)}$
over the
 projectivization of the algebra $\EE_{V,0}$:
 Equation~(\ref{eqn:Poincare_as_integral}).
 One can give an equation for it in the spirit
 of~\cite[Theorem~1]{Monats}, however the right hand side
 of it seems to be not really computable. (At least even
 in the simplest cases the result looks very envolved.)
 Therefore we shall not discuss it below (for one curve
 valuation on $\OO_{\C^2,0}$).

 \item[4)] For the case of one valuation one has the
 {\em semigroup Poincar\'e series} $P^{S}_{\nu}(t)$
 defined as the generating series of the semigroup $S_\nu$ of values of $\nu$ (an
analogue of Equation~(\ref{eqn:semigroup_one_curve})):
 $$
 P^{S}_{\nu}(t) =\sum_{v\in S_\nu} t^v.
 $$
 All the coefficients of $P^{S}_\nu(t)$ are equal to
 $0$ or $1$. (In the case of one curve valuation on $\OO_{V_\C,0}$,
 it can be obtained from $P_\nu(t)$
 substituting all the monomials of the form $2t^v$
 by the monomials $t^v$.) A reasonable analogue of this
 Poincar\'e series for the case of several valuations is
 not clear.
\end{enumerate}

\section{The Poincar\'e series for a one curve valuation}
\label{sec:one_valuation}
Let $(C,0)\subset(\C^2,0)$ be an irreducible plane curve germ (where we regard the complex plane $\C^2$ as the
complexification of the real plane $\R^2$) and let
$\pi:(X,D)\to(\C^2,0)$ be the minimal {\em real resolution}
of the curve $C$ (or of the valuation $\nu_C$). This means that it is the minimal resolution of the curve $C\cup \overline{C}$, where $\overline{C}$
is the complex conjugate of the curve $C$. This resolution can be obtained by a sequence of blow ups either at real points or at pairs of complex conjugate points.

\begin{figure}[h]
$$
\unitlength=0.50mm
\begin{picture}(160.00,110.00)(0,-20)
\thinlines
\put(-10,30){\line(1,0){16}} 
\put(8,30){\circle*{0.5}}
\put(10,30){\circle*{0.5}}
\put(12,30){\circle*{0.5}}
\put(14,30){\circle*{0.5}}
\put(16,30){\circle*{0.5}}
\put(18,30){\line(1,0){22}} 
\put(40,30){\circle*{2}} 
\put(37,33){{\scriptsize$\rho$}} 

\put(40,30){\line(2,1){70}} 
\put(110,65){\circle*{2}} 
\put(70,45){\circle*{2}} 
\put(90,55){\circle*{2}} 
\put(110,65){\line(-1,2){10}} 
\put(100,85){\circle*{2}} 
\put(102,87){{\scriptsize$\overline{\sigma_{g}}$}} 
\put(70,45){\line(-1,2){12}} 
\put(58,69){\circle*{2}} 
\put(90,55){\line(-1,2){15}} 
\put(75,85){\circle*{2}} 
\put(77,87){{\scriptsize$\overline{\sigma_{g-1}}$}} 
\put(110,65){\vector(1,0){20}} 
\put(104,56.5){{\scriptsize$\overline{\delta_C}\!=\!\overline{\tau_g}$}} 
\put(132,63){{\scriptsize$\overline{C}$}} 
\put(90,48){{\scriptsize$\overline{\tau_{g-1}}$}} 

\put(40,30){\line(2,-1){70}} 
\put(110,-5){\circle*{2}} 
\put(70,15){\circle*{2}} 
\put(90,5){\circle*{2}} 
\put(110,-5){\line(-1,-2){10}} 
\put(100,-25){\circle*{2}} 
\put(103,0){{\scriptsize${\delta_C}\!=\!{\tau_g}$}} 
\put(132,-3){{\scriptsize${C}$}} 
\put(91,6){{\scriptsize${\tau_{g-1}}$}} 
\put(70,15){\line(-1,-2){12}} 
\put(59,-9){\circle*{2}} 
\put(90,5){\line(-1,-2){15}} 
\put(75,-25){\circle*{2}} 
\put(110,-5){\vector(1,0){20}} 
\put(77,-25){{\scriptsize${\sigma_{g-1}}$}} 
\put(102,-25){{\scriptsize${\sigma_{g}}$}} 


\put(-10,30){\circle*{2}}
\put(5,30){\line(0,-1){15}}
\put(5,30){\circle*{2}}
\put(5,15){\circle*{2}}
\put(25,30){\line(0,-1){20}}
\put(25,30){\circle*{2}}
\put(25,10){\circle*{2}}

\put(-20,24){{\scriptsize ${\bf 1}\!=\!\sigma_0$}}
\put(6.5,10){{\scriptsize$\sigma_1$}}
\put(26.5,7){{\scriptsize$\sigma_q$}}
\put(4,33){{\scriptsize$\tau_1$}}
\put(24,33){{\scriptsize$\tau_q$}}
\end{picture}
$$
\caption{The minimal real resolution graph $\Gamma$ of the valuation $\{\nu_C\}$.}
\label{fig1}
\end{figure}
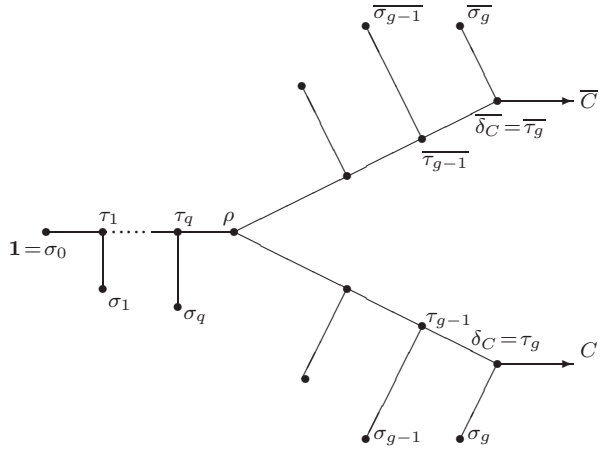

The (dual) resolution graph of $\pi$ looks, in general,
like on Figure~\ref{fig1}. The vertex $\delta_C$ corresponds to the component intersecting the strict transform of the curve $C$, the verticies $\sigma_i$ and
$\overline{\sigma_i}$, $i=0, 1, \cdots, g$, are {\em the dead ends} of the graph,
the verticies $\tau_i$ and
$\overline{\tau_i}$, $i=1, 2,\cdots, g$, are {\em the rupture points},
the vertex $\rho$ is {\em the splitting point} between
the resolutions of $C$ and of $\overline{C}$. The vertex
$\rho$ (the splitting point) may coincide with one of the rupture points $\tau_q$, $1\le q\le g$,
or with the initial point $\sigma_0$.
There are two options for the vertex $\delta_C$: either it coincides with the
rupture point $\tau_g$ (as on Figure~\ref{fig1}; this happens if the resolutions of $C$ and $\overline{C}$ split not later than each of these curves is resolved), or with the splitting point $\rho$ if, at the moment when each of the curves is resolved, their resolutions
do not split yet. In the latter case the end
of the resolution graph $\Gamma$ looks like on Figure~\ref{fig2}.

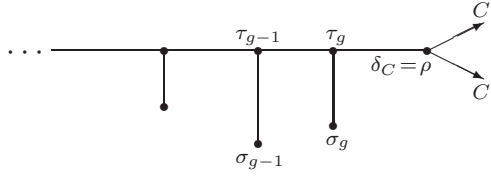
\begin{figure}
$$
\unitlength=0.50mm
\begin{picture}(160.00,40.00)(0,5)
\thinlines
\put(-10,30){\line(1,0){100}}
\put(-22,27.5){$\cdots$}
\put(65,30){\circle*{2}}
\put(20,30){\circle*{2}}
\put(45,30){\circle*{2}}
\put(90,30){\circle*{2}}
\put(90,30){\vector(2,1){15}}
\put(90,30){\vector(2,-1){15}}
\put(20,30){\line(0,-1){15}}
\put(20,15){\circle*{2}}
\put(45,30){\line(0,-1){25}}
\put(45,5){\circle*{2}}
\put(65,30){\line(0,-1){20}}
\put(65,10){\circle*{2}}

\put(39,33){{\scriptsize$\tau_{g-1}$}} 
\put(63,33){{\scriptsize$\tau_{g}$}} 
\put(75,24.5){{\scriptsize$\delta_{C}\!=\!\rho$}} 
\put(102,17){{\scriptsize${C}$}} 
\put(102,39){{\scriptsize$\overline{C}$}} 
\put(39,0){{\scriptsize$\sigma_{g-1}$}} 
\put(63,5){{\scriptsize$\sigma_{g}$}} 

\end{picture}
$$
\caption{The case when the resolutions of the curves $C$ and $\overline{C}$ split after each of the curves is resolved.}
\label{fig2}
\end{figure}
The complex conjugation acts on the resolution graph $\Gamma$ (keeping fixed the
part before the splitting point and exchanging the parts after it). The quotient of $\Gamma$ by the complex conjugation looks like on
Figure~\ref{fig3} and is a resolution graph
of the curve $C$ (the minimal one if
$\sigma_C=\tau_g$). (If $\sigma_C\ne\tau_g$,
the end of this graph looks like on
Figure~\ref{fig2}.)
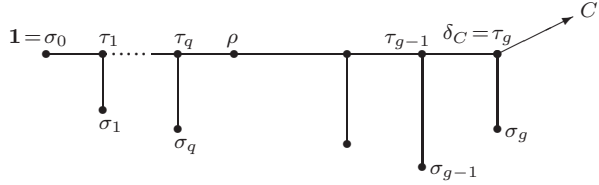
\begin{figure}[h]
$$
\unitlength=0.50mm
\begin{picture}(160.00,40.00)(0,5)
\thinlines


\put(-10,30){\line(1,0){16}}
\put(8,30){\circle*{0.5}}
\put(10,30){\circle*{0.5}}
\put(12,30){\circle*{0.5}}
\put(14,30){\circle*{0.5}}
\put(16,30){\circle*{0.5}}
\put(18,30){\line(1,0){92}}

\put(110,30){\circle*{2}}
\put(-10,30){\circle*{2}}
\put(5,30){\line(0,-1){15}}
\put(5,30){\circle*{2}}
\put(5,15){\circle*{2}}
\put(25,30){\line(0,-1){20}}
\put(25,30){\circle*{2}}
\put(25,10){\circle*{2}}

\put(40,30){\circle*{2}} 
\put(38,33){{\scriptsize$\rho$}} 
\put(70,30){\circle*{2}} 
\put(90,30){\circle*{2}} 
\put(110,30){\circle*{2}} 

\put(70,30){\line(0,-1){24}}
\put(90,30){\line(0,-1){30}}
\put(110,30){\line(0,-1){20}}
\put(70,6){\circle*{2}} 
\put(90,0){\circle*{2}} 
\put(110,10){\circle*{2}} 

\put(110,30){\vector(2,1){20}} 

\put(-20,33){{\scriptsize ${\bf 1}\!=\!\sigma_0$}}
\put(4,33){{\scriptsize $\tau_1$}}
\put(24,33){{\scriptsize $\tau_q$}}
\put(80,33){{\scriptsize $\tau_{g-1}$}}
\put(95.5,34){{\scriptsize $\delta_C\!=\!\tau_g$}}
\put(132,40){{\scriptsize $C$}}

\put(4,10){{\scriptsize $\sigma_1$}}
\put(24,5){{\scriptsize $\sigma_q$}}
\put(92,-2){{\scriptsize $\sigma_{g-1}$}}
\put(112,8){{\scriptsize $\sigma_g$}}

\end{picture}
$$
\caption{The quotient of the resolution graph $\Gamma$ by
the complex conjugation.}
\label{fig3}
\end{figure}

Let the exceptional divisor $D$ be the union of its irreducible componets $E_\sigma$, $\sigma\in\Gamma$.
(Each $E_\sigma$ is isomorphic to the complex projective line.) One has the natural involution of complex conjugation
on $\Gamma$. Let $(E_\sigma\circ E_\delta)$ be the
intersection matrix of the components of the exceptional
divisor. (The self-intersection number $E_\sigma\circ E_\sigma$ is a negative integer and, for $\delta\ne\sigma$,
$E_\sigma\circ E_\delta$ is either $1$ (if $E_\sigma$ and  $E_\delta$ intersect) or $0$ otherwise.) Let
$(m_{\sigma\delta}):=-(E_\sigma\circ E_\delta)^{-1}$.
(The entries $m_{\sigma\delta}$ are positive integers.)
Let, as above, the strict transform of the curve $C$ intersect the
exceptional divisor $D$ at a point of the component
$E_{\delta_C}$ and let $m_{\sigma}:=m_{\sigma\delta_C}$.

Let $M_\sigma\in\Z_{>0}$ be defined in the following way.
If $E_{\overline{\sigma}}=E_\sigma$, one puts $M_\sigma=m_\sigma$; if
$E_{\overline{\sigma}}\ne E_\sigma$,
one puts $M_\sigma:=m_\sigma+m_{\overline\sigma}$.


Let $S_{C}$ be the usual semigroup of the complex curve $C$, i.\,e.,
$S_{C} = \{\nu_{C}(g) : g\in \OO_{\C^2,0}\}$. The set of multiplicities
$\{m_{\sigma_0}, \ldots, m_{\sigma_g}\}$ is
the minimal set of
generators of $S_{C}$. Let $e_i :=\gcd \{\seq{m}{\sigma_0}{\sigma_i}\}$ for
$i=0,1,\ldots, g$ and let $N_i = e_{i-1}/e_i$ for $i=1,2,\ldots,g$. (One has $m_{\tau_i}=N_i m_{\sigma_i}$
for $i=1,2,\ldots, g$.)

For $\delta\in \Gamma$, let $\pi_{\delta}: (X_{\delta},D_{\delta})\to (\C^2,0)$ be
the minimal modification of $(\C^2,0)$ such that $E_{\delta}\subset D_{\delta}$. In
particular, $E_{\delta}$ is the last exceptional component appearing in $X_{\delta}$
and is produced by blow-up at a point $p_{\delta}$ of a previous component.
For $g\in \OO_{\C^2,0}$, we will denote by
$e_{\delta}(g)$ the multiplicity of the strict transform of the curve $\{g=0\}$ at
the point $p_{\delta}$. Notice that $e_\delta(g)$ coincides with the intersection
multiplicity of the strict transform of $\{g=0\}$ and $E_{\delta}$ in the surface
$X_{\delta}$.

\begin{lemma}\label{lemma:generation}
 The integers $M_{\sigma_0}$, $M_{\sigma_1}$, \dots, $M_{\sigma_g}$ generate the semigroup $S_C^{\R}$ of values of the
 valuation $\nu_C$ on $\EE_{\R^2,0}$. One has
 $(N_i-1)M_{\sigma_i}\notin \langle M_{\sigma_0}, \ldots, M_{\sigma_{i-1}}\rangle$,
 $N_i M_{\sigma_i}\in \langle M_{\sigma_0}, \ldots, M_{\sigma_{i-1}}\rangle$ and
$N_i M_{\sigma_i}< M_{\sigma_{i+1}}$.
(In particular, $M_{\sigma_0}$, $M_{\sigma_1}$, \dots,
$M_{\sigma_g}$ is the minimal set of generators of the
semigroup $S_C^{\R}$.)
\end{lemma}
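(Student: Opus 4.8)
The plan is to establish three facts: (i) $\langle M_{\sigma_0},\dots,M_{\sigma_g}\rangle\subseteq S_C^{\R}$; (ii) $S_C^{\R}\subseteq\langle M_{\sigma_0},\dots,M_{\sigma_g}\rangle$; and (iii) the three displayed arithmetic relations, which together with the chain $M_{\sigma_0}<M_{\sigma_1}<\dots<M_{\sigma_g}$ (forced by $N_iM_{\sigma_i}<M_{\sigma_{i+1}}$ and $N_i\ge2$) imply that $\{M_{\sigma_i}\}$ is the minimal generating set: a smaller generator cannot be a positive combination of larger ones, so redundancy of $M_{\sigma_i}$ would give $(N_i-1)M_{\sigma_i}\in\langle M_{\sigma_0},\dots,M_{\sigma_{i-1}}\rangle$, contradicting (iii). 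For (i), fix for each dead end $\sigma_i$ a curvette $g_{\sigma_i}\in\OO_{\C^2,0}$ of $E_{\sigma_i}$ (a function germ whose strict transform under $\pi$ is smooth and meets $E_{\sigma_i}$ transversally at a point lying on no other component of $\pi^{-1}(C)$). Writing $\pi^{*}\{g_{\sigma_i}=0\}=\widetilde{g_{\sigma_i}}+\sum_{\delta}m_{\sigma_i\delta}E_{\delta}$ and intersecting with the strict transform of $C$, which meets $D$ transversally in one point of $E_{\delta_C}$, one gets $\nu_C(g_{\sigma_i})=m_{\sigma_i\delta_C}=m_{\sigma_i}$. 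If $E_{\overline{\sigma_i}}=E_{\sigma_i}$, then since $\pi$ is a real modification and the real points of $E_{\sigma_i}$ form a circle, the transversality point may be chosen real, so $g_{\sigma_i}$ can be taken in $\EE_{\R^2,0}$ and $M_{\sigma_i}=m_{\sigma_i}=\nu_C(g_{\sigma_i})\in S_C^{\R}$. If $E_{\overline{\sigma_i}}\ne E_{\sigma_i}$, then $\overline{g_{\sigma_i}}$ is a curvette of $E_{\overline{\sigma_i}}$, so $g_{\sigma_i}\overline{g_{\sigma_i}}\in\EE_{\R^2,0}$ has $\nu_C(g_{\sigma_i}\overline{g_{\sigma_i}})=m_{\sigma_i}+m_{\overline{\sigma_i}}=M_{\sigma_i}\in S_C^{\R}$.

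For (ii): since $\nu_C$ is a valuation it is enough to treat $h$ irreducible in $\EE_{\R^2,0}$; over $\C$ either $h$ remains irreducible and $\nu_C(h)$ equals the intersection multiplicity $(C\cdot h)_0$, or $h=h_1\overline{h_1}$ with $h_1$ irreducible over $\C$ and $\nu_C(h)=(C\cdot h_1)_0+(\overline C\cdot h_1)_0$. Resolving $C\cup\overline C\cup\{h=0\}$ by a real modification $\pi'$ dominating $\pi$ and repeating the intersection computation (multiplicities $m$ now computed in the graph of $\pi'$), one obtains $\nu_C(h)=\sum_{\tau}n_{\tau}\,m_{\delta_C\tau}$, where $n_\tau$ is the number of branches of the strict transform of $\{h=0\}$ meeting $E_{\tau}$. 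Conjugation invariance of $\{h=0\}$ gives $n_{\overline{\tau}}=n_{\tau}$, so $\nu_C(h)$ is a non-negative integer combination of the numbers $M_{\tau}$, where $M_{\tau}=m_{\delta_C\tau}$ when $E_{\overline{\tau}}=E_{\tau}$ and $M_{\tau}=m_{\delta_C\tau}+m_{\delta_C\overline{\tau}}$ otherwise, with $\tau$ running over exceptional components of real modifications dominating $\pi$. Hence (ii) reduces to showing that every such $M_{\tau}$ lies in $\langle M_{\sigma_0},\dots,M_{\sigma_g}\rangle$.

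This reduction is the geometric heart of the argument. One first disposes of the $\tau$ produced by blow-ups beyond $\Gamma$: by the proximity relations in the resolution of the branch $C$, such an $m_{\delta_C\tau}$ is a positive combination of the $m_{\delta_C\sigma}$, $\sigma\in\Gamma$ (a generic curvette of $E_\tau$ is a generic curvette of the component of $\Gamma$ it sits over, or its value is the sum of the values of the two adjacent components of $\Gamma$), and the same holds for $M_\tau$. For $\tau\in\Gamma$ one argues along the graph of Figure~\ref{fig3}, using the identity $m_{uv}m_{ww}=m_{uw}m_{wv}$ for $w$ on the geodesic $[u,v]$ and the conjugation symmetry of $\Gamma$ (which exchanges the two arms and fixes the part up to $\rho$). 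When $E_{\overline{\tau}}=E_{\tau}$, the geodesic $[\tau,\delta_C]$ lies in the common part of $\Gamma$ apart from its passage through $\rho$, and $M_{\tau}=m_{\delta_C\tau}$ is reduced, through $\rho$, to $\langle m_{\sigma_0},\dots,m_{\sigma_q}\rangle=\langle M_{\sigma_0},\dots,M_{\sigma_q}\rangle$ by the classical semigroup relations for the branch $C$ (including $m_{\tau_i}=N_im_{\sigma_i}$ and $m_{\delta_C\tau}\in\langle m_{\sigma_0},\dots,m_{\sigma_i}\rangle$ for $\tau$ between $\delta_C$ and $\sigma_i$). When $E_{\overline{\tau}}\ne E_{\tau}$, $\tau$ lies on the $C$-arm and $\overline{\tau}$ on the $\overline C$-arm; rewriting $m_{\delta_C\overline{\tau}}$ through the value at $\rho$ and adding it to $m_{\delta_C\tau}$, one reduces $M_{\tau}$ to $M_{\sigma_{q+1}},\dots,M_{\sigma_g}$ by the ``doubled'' proximity relations, by induction on the distance of $\tau$ from $\rho$ along the arm, the base case being $\tau=\sigma_i$. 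The careful matching of cases at the splitting point $\rho$, and of the two possibilities $\delta_C=\tau_g$ and $\delta_C=\rho$ of Figures~\ref{fig1}--\ref{fig2}, is the delicate part, and I expect this to be the main obstacle.

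Finally (iii). For $i\le q$ one has $M_{\sigma_j}=m_{\sigma_j}$ for $j\le i$, so the three relations are exactly the known ones for $S_C$. For $i>q$ write $M_{\sigma_i}=m_{\sigma_i}+m_{\overline{\sigma_i}}$; combining $N_im_{\sigma_i}\in\langle m_{\sigma_0},\dots,m_{\sigma_{i-1}}\rangle$ and $N_im_{\sigma_i}<m_{\sigma_{i+1}}$ with their conjugates on the $\overline C$-arm, via the reductions of the previous paragraph, gives $N_iM_{\sigma_i}\in\langle M_{\sigma_0},\dots,M_{\sigma_{i-1}}\rangle$ and $N_iM_{\sigma_i}<M_{\sigma_{i+1}}$. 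Moreover the conjugation symmetry and the identity $m_{uv}m_{ww}=m_{uw}m_{wv}$ at $w=\rho$ give $e_{i-1}\mid m_{\overline{\sigma_i}}$, whence inductively $\gcd(M_{\sigma_0},\dots,M_{\sigma_i})=e_i$ and $\langle M_{\sigma_0},\dots,M_{\sigma_{i-1}}\rangle\subseteq e_{i-1}\Z$; since $M_{\sigma_i}\equiv m_{\sigma_i}\not\equiv0$ and $N_iM_{\sigma_i}\equiv0\pmod{e_{i-1}}$, we get $(N_i-1)M_{\sigma_i}\equiv-m_{\sigma_i}\not\equiv0\pmod{e_{i-1}}$, so $(N_i-1)M_{\sigma_i}\notin\langle M_{\sigma_0},\dots,M_{\sigma_{i-1}}\rangle$. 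This proves (iii), and with (i) and (ii) the lemma follows.
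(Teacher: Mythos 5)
Your overall architecture matches the paper's: reduce to germs irreducible over $\R$ (hence to real curvettes or to products $f\overline{f}$ of conjugate curvettes), push extra blow-ups down to the graph $\Gamma$, split into cases according to the position of the contact component relative to the splitting point $\rho$, and get minimality of the generators from $\gcd(M_{\sigma_0},\ldots,M_{\sigma_i})=e_i$. Parts (i) and the congruence argument for $(N_i-1)M_{\sigma_i}\notin\langle M_{\sigma_0},\ldots,M_{\sigma_{i-1}}\rangle$ are fine (and the latter is essentially what the paper does). The reduction of (ii) to the single statement ``$M_\tau\in\langle M_{\sigma_0},\ldots,M_{\sigma_g}\rangle$ for every component $\tau$ of a real modification dominating $\pi$'' is also a legitimate reformulation.

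The genuine gap is exactly where you predicted it: the case of $\tau$ on the trunk of the $C$-arm beyond $\rho$, i.e.\ $\tau\in[\tau_i,\tau_{i+1}]$ with $i\ge q+1$. Here $\nu_C(\varphi_\tau)=\sum_{j=0}^i k_jm_{\sigma_j}$ in canonical form, and $M_\tau=\sum_{j=0}^ik_jM_{\sigma_j}+a\,m_\rho$ with $a\,m_\rho=\nu_C(\overline{\varphi_\tau})-\sum_{j=q+1}^ik_jm_{\overline{\sigma}_j}$; the whole point is to prove $a\ge 0$, and nothing in your sketch does this. The ``doubled proximity relations'' do not close up as an induction: blowing up a point $p$ on the strict transforms gives $M_{\tau'}=\sum M_\tau+m_p(C)+m_p(\overline{C})$, and the extra multiplicity term is not visibly in $\langle M_{\sigma_0},\ldots,M_{\sigma_g}\rangle$, so ``induction on the distance from $\rho$'' with base case $\tau=\sigma_i$ proves nothing by itself. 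What is actually needed (and what the paper supplies) is: (a) the identity $\nu_C(\overline{\varphi_\tau})=e_\rho(\varphi_\tau)m_\rho$, specializing to $m_{\overline{\sigma}_j}=N_{q+1}\cdots N_{j-1}m_\rho$; and (b) the estimate $\sum_{j=q+1}^ik_jm_{\overline{\sigma}_j}\le(N_{q+1}\cdots N_i-1)m_\rho<N_{q+1}\cdots N_i\,m_\rho\le e_\rho(\varphi_\tau)m_\rho$, which uses both $k_j\le N_j-1$ from the canonical representation and the lower bound $e_{\tau_i}(\varphi_\tau)\ge1$ for $\tau\in[\tau_i,\tau_{i+1}]$. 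The two sides of this inequality nearly coincide, so no soft argument will replace it; without it the coefficient $a$ could a priori be negative and the decomposition fails. The same missing estimate is what you would need to justify $N_iM_{\sigma_i}\in\langle M_{\sigma_0},\ldots,M_{\sigma_{i-1}}\rangle$ in your part (iii), so the gap propagates there as well.
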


\begin{proof}
Let $g\in \EE_{\R^2,0}$ be irreducible, let $G: \{g=0\}$ be the curve defined
by $g$ and let $\w{G}$ denote the strict transform of $G$
on $X$,
Making additional blow-ups at intersection points of pairs of exceptional components
one can assume that $\w{G}$ intersects $D$ at a smooth point of the component
$E_{\delta}\subset D$. Along the proof we will use known results
relating the
position of $\delta$ in the dual graph and the corresponding value $\nu_{C}(g)$: see,
e.g., \cite{Del-Lille, Casas, Wall}.

Let $q$ be the maximal index such that $\tau_q$
preceeds (or is equal to) $\rho$. (If such $q$ inbetween 1 and $g$ does not exist (i.\,e., the resolutions of $C$ and $\b{C}$ split before $\tau_1$), we put $q=0$.)
If $\delta$ preceeds $E_\rho$, i.\,e., $E_\delta\subset D_{\rho}$, then
$\nu_{C}(g) \in \langle \seq{m}{\sigma_0}{\sigma_q}\rangle$ and so
$\nu_{C}(g) \in \langle \seq{M}{\sigma_0}{\sigma_q}\rangle$. In particular,
$m_{\rho}\in \langle \seq{M}{\sigma_0}{\sigma_q}\rangle$ because
$m_{\rho}=\nu_{C}(\varphi_\rho)$, where $\{\varphi_\rho=0\}$ is a curvette at the
divisor $E_{\rho}$.

Now, let us assume that $\delta$ is after $\rho$. In this case $g$ is the product of
two irreducible complex conjugate functions, $g= f\cdot\b{f}$, equivalently
$G = F\cup \b{F}$ with $F=\{f=0\}$ being a complex irreducible curve germ. As earlier, denote by
$E_{\delta}$ the (unique) divisor on the minimal resolution of $C$ intersecting
the strict transform $\w{F}$ of $F$, so
$E_{\b{\delta}}$ is the corresponding component for the conjugate $\b{F}$ of $F$.
The splitting point of $\b{\delta}$ and $C$ is $\rho$ and
therefore
$\nu_{C}(\b{f}) = e_{\rho}(f) m_{\rho}$. Let $[\alpha,\beta]$ denote the geodesic
in $\Gamma$ between the vertices $\alpha$ and $\beta$.

If $\delta\in [\sigma_i, \tau_{i}]$ for some $i\ge q+1$, then
$\nu_C(f) = k m_{\sigma_i}$ for some positive integer $k$. For the conjugate one has
that $\b{\delta}\in [\b{\sigma}_i, \b{\tau}_{i}]$ and so
$\nu_C(\b{f}) = \nu_{\b{C}}(f)= k m_{\b{\sigma}_i}$. Thus, in this case one has
that
$\nu_{C}(g) = \nu_{C}(f)+\nu_{C}(\b{f}) = k(m_{\sigma_i}+ m_{\b{\sigma}_i}) =
k M_{\sigma_i}$.

In particular for $m_{\tau_i}$, it is known that $m_{\tau_i}= N_i
m_{\sigma_i}$ and so $M_{\tau_i} = N_i M_{\sigma_i}$. Moreover, if $\{\varphi=0\}$
is a curvette at $E_{\sigma_i}$ there is
$e_{\sigma_i}(\varphi) = e_{\tau_{i-1}}(\varphi)=1$ and therefore
\begin{equation}\label{eqn:rho}
e_\rho(\varphi) m_\rho =
m_\rho e_{\tau_{q}}(\varphi) = m_\rho
\frac{e_{\tau_q}(\varphi)}{e_{\tau_{q+1}}(\varphi)}\cdot\ldots\cdot
\frac{e_{\tau_{i-2}}(\varphi)}{e_{\tau_{i-1}}(\phi)} e_{\tau_{i-1}}(\varphi) =
N_{q+1}\cdot\ldots\cdot N_{i-1} m_\rho\;.
\end{equation}
The last equality is due to the fact that
$e_{\tau_{j-1}}(\varphi)/e_{\tau_{j}}(\varphi) = e_{j-1}/e_j$ for $j\le i-1$.
Thus one has
$M_{\sigma_{q+1}} = m_{\sigma_{q+1}}+m_\rho$ and
\begin{equation}\label{eqM}
M_{\sigma_i} = m_{\sigma_i}+m_{\b{\sigma}_i} = m_{\sigma_i} + N_{q+1}\cdot\ldots\cdot N_{i-1}
m_\rho\ \ {\text{for }}i>q+1\,.
\end{equation}

If $\delta\in [\rho, \tau_{q+1}]$, one has $\nu_C(f)\in
\langle\seq{m}{\sigma_0}{\sigma_q}\rangle$. On the other hand,
$\b{\delta}\in [\rho, \b{\tau}_{q+1}]$ and therefore
$\nu_C(\b{f}) = e_{\rho}(f) m_\rho$. Thus,
$\nu_C(g) = \nu_C(f)+\nu_C(\b{f}) \in
\langle\seq{M}{\sigma_0}{\sigma_q}\rangle$.

Finally, let us assume that
$\delta\in [\tau_{i}, \tau_{i+1}]$ for some $i\ge q+1$.
In this case
$\nu_C(f)\in \langle\seq{m}{\sigma_0}{\sigma_{i}}\rangle$ and so
$\nu_C(f) = \sum_{j=0}^{i} k_j m_{\sigma_j}$ with $k_0\ge 0$ and
$0\le k_j < N_j$ for $j=1,\ldots, i$; moreover the integers $k_j$ are
uniquely determined by
these conditions.
On the other hand, $\b{\delta}\in [\b{\tau}_i,\b{\tau}_{i+1}]$ and then
$e_{\b{\tau}_i}(\b{f}) = e_{\tau_i}(f)\ge 1$. By the equations~(\ref{eqn:rho}), one
has
$$
e_\rho(f) = e_{\tau_q}(f) = N_{q+1}\cdot\ldots\cdot N_i e_{\tau_i}(f) \ge
N_{q+1}\cdot\ldots\cdot N_i
$$
and
therefore
$\nu_C(\b{f})= N_{q+1}\cdot\ldots\cdot N_i e_{\tau_i}(f) m_\rho\ge N_{q+1}\cdot\ldots\cdot N_i
m_\rho$. By Equation~(\ref{eqM}) one has 
\begin{align*}
\sum_{j=q+1}^i k_j m_{\b{\sigma}_{j}} & =
\sum_{j=q+1}^i k_j N_{q+1}\cdot\ldots\cdot N_{j-1} m_\rho   \le
m_\rho \sum_{j=q+1}^i N_{q+1}\cdot\ldots\cdot N_{j-1} (N_j-1) \\
&= m_\rho (N_{q+1}\cdot\ldots\cdot N_i-1)
 < (N_{q+1}\cdot\ldots\cdot N_i)m_\rho  \le \nu_{C}(\b{f}) \; .
\end{align*}
As a consequence,  for some integer $a>0$, one has
\begin{align*}
\nu_C(g) &= \nu_C(f)+\nu_C(\b{f})  = \sum_{0}^i k_j m_{\sigma_j} +e_\rho(f)m_\rho =
\\
&= \sum_{0}^q k_j m_{\sigma_j} + \sum_{q+1}^i k_j
(m_{\sigma_j}+m_{\b{\sigma}_j}) + a m_\rho \\
&= \sum_{0}^i k_j M_{\sigma_j} + a m_{\rho}\in
\langle\seq{M}{\sigma_0}{\sigma_i}\rangle
\end{align*}
and the first part of the statement is proved.

The remainning statements of the Lemma are trivial consequences of the same
properties for the set of multiplicities
$\seq{m}{\sigma_0}{\sigma_g}$ (the minimal set of generators of the
semigroup $S_{\C}$), taking into account that
$e_i = \gcd\{\seq{M}{\sigma_0}{\sigma_{i}}\}$ for $0\le i \le g$.
\end{proof}


\begin{remark}\label{rem1}
 Lemma~\ref{lemma:generation} alongside with the inequality
 implies that the ``real'' semigroup $S_C^{\R}$ coincides with
 the usual semigroup $S_{C'}$ of another plane curve singularity $(C',0)$.
 Examples: for $C$ given by $x=t^4$, $y=\alpha t^4 +t^6+t^7$
 with a generic complex $\alpha$, as the curve $C'$
 one can take $x=t^4$, $y=t^{10}+t^{11}$;for $C$ given by $x=t^4$, $y=\alpha t^6+t^7$
 with a generic complex $\alpha$, as the curve $C'$
 one can take $x=t^4$, $y=t^6+t^{19}$; for $C$ given by $x=t^4$, $y=t^6+\alpha t^7$
 with a generic complex $\alpha$, as the curve $C'$
 one can take the initial curve: $x=t^4$, $y=t^6+t^7$.
In general, if one defines $b_0:=M_{\sigma_0}$, $b_1:=M_{\sigma_1}$ and
$b_{i+1}:=M_{\sigma_{i+1}}- N_{i}M_{\sigma_i}$ for $i\ge 1$, the semigroup $S_C^{\R}$ is the
semigroup of values of the complex branch defined (among others) by
$x=t^{b_0}, y=\sum_{i\ge 1} t^{b_i}$.
\end{remark}

\begin{theorem}\label{theo:Semigroup_Poincare}
 One has
 $$
 P^S_{\nu_C}(t)=\frac{\prod_{i=1}^g(1-t^{M_{\tau_i}})}
 {\prod_{i=0}^g(1-t^{M_{\sigma_i}})}\,,
 $$
\end{theorem}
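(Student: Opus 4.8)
The plan is to reduce the statement to a purely combinatorial fact about the numerical semigroup $S_C^{\R}$, using Lemma~\ref{lemma:generation}. That lemma says that $S_C^{\R}$ is generated by $\seq{M}{\sigma_0}{\sigma_g}$, that $N_iM_{\sigma_i}\in\langle\seq{M}{\sigma_0}{\sigma_{i-1}}\rangle$, and (together with Remark~\ref{rem1}) that $S_C^{\R}$ is a \emph{free} (telescopic) semigroup, i.e.\ has the same combinatorial structure as the semigroup of a plane branch. For such a semigroup the generating series is classically cyclotomic of exactly the stated shape, so the work is to establish the standard-form decomposition and then read off the numerator exponents $M_{\tau_i}=N_iM_{\sigma_i}$.

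First I would prove the \emph{standard form}: every $v\in S_C^{\R}$ has a unique expression $v=\sum_{i=0}^g k_iM_{\sigma_i}$ with $k_0\ge0$ and $0\le k_i<N_i$ for $i=1,\dots,g$. For existence, start from any representation of $v$ with non-negative coefficients (possible since the $M_{\sigma_i}$ generate) and, whenever some coefficient $k_j$ with $j\ge1$ satisfies $k_j\ge N_j$, replace $N_jM_{\sigma_j}$ by a non-negative combination of $\seq{M}{\sigma_0}{\sigma_{j-1}}$ (available by Lemma~\ref{lemma:generation}); choosing a representation that is lexicographically minimal in $(k_g,k_{g-1},\dots,k_1)$ guarantees this reduction terminates. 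For uniqueness, if two standard forms of $v$ differ, look at the largest index $j$ at which the coefficients $k_j\ne k_j'$; then $j\ge1$ and $(k_j-k_j')M_{\sigma_j}\in\langle\seq{M}{\sigma_0}{\sigma_{j-1}}\rangle_{\Z}=e_{j-1}\Z$, so $e_{j-1}\mid(k_j-k_j')M_{\sigma_j}$; since $e_j=\gcd(\seq{M}{\sigma_0}{\sigma_j})$ (as recorded in the proof of Lemma~\ref{lemma:generation}) the integers $N_j=e_{j-1}/e_j$ and $M_{\sigma_j}/e_j$ are coprime, forcing $N_j\mid k_j-k_j'$, which contradicts $0\le k_j,k_j'<N_j$.

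With the standard form in hand the computation is immediate: the map $(k_0,\dots,k_g)\mapsto\sum_{i=0}^g k_iM_{\sigma_i}$ is a value-preserving bijection from $\Z_{\ge0}\times\prod_{i=1}^g\{0,\dots,N_i-1\}$ onto $S_C^{\R}$, hence
$$
P^S_{\nu_C}(t)=\sum_{v\in S_C^{\R}}t^v
=\Big(\sum_{k_0\ge0}t^{k_0M_{\sigma_0}}\Big)\prod_{i=1}^g\Big(\sum_{k_i=0}^{N_i-1}t^{k_iM_{\sigma_i}}\Big)
=\frac{1}{1-t^{M_{\sigma_0}}}\prod_{i=1}^g\frac{1-t^{N_iM_{\sigma_i}}}{1-t^{M_{\sigma_i}}},
$$
and substituting $M_{\tau_i}=N_iM_{\sigma_i}$ gives the formula. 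An equivalent, slightly cleaner, route is induction on $g$: writing $S^{(i)}=\langle\seq{M}{\sigma_0}{\sigma_i}\rangle$, one shows $S^{(i)}=\bigsqcup_{k=0}^{N_i-1}\big(kM_{\sigma_i}+S^{(i-1)}\big)$ (the disjointness being the same $e_{i-1}$-divisibility argument), which yields the recursion $P_{S^{(i)}}(t)=\frac{1-t^{N_iM_{\sigma_i}}}{1-t^{M_{\sigma_i}}}\,P_{S^{(i-1)}}(t)$ with base case $P_{S^{(0)}}(t)=(1-t^{M_{\sigma_0}})^{-1}$.

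The only genuine obstacle is the uniqueness (equivalently disjointness) step: controlling which integer multiples of $M_{\sigma_j}$ lie in the subgroup generated by $\seq{M}{\sigma_0}{\sigma_{j-1}}$, which rests entirely on $e_i=\gcd(\seq{M}{\sigma_0}{\sigma_i})$ and $N_i=e_{i-1}/e_i$ — both already available from Lemma~\ref{lemma:generation}. Note that the remaining assertions of that lemma, namely $(N_i-1)M_{\sigma_i}\notin\langle\seq{M}{\sigma_0}{\sigma_{i-1}}\rangle$ and $N_iM_{\sigma_i}<M_{\sigma_{i+1}}$, are not needed here; they only serve to identify the $M_{\sigma_i}$ as the minimal generators. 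So no input beyond Lemma~\ref{lemma:generation} is required.
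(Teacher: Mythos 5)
Your proof is correct and follows essentially the same route as the paper: the paper's own proof simply asserts (as a consequence of Lemma~\ref{lemma:generation}) the unique standard-form representation $v=\sum_i k_iM_{\sigma_i}$ with $0\le k_i<N_i$ for $i\ge1$, and reads off the product formula using $M_{\tau_i}=N_iM_{\sigma_i}$. You have merely supplied the existence/uniqueness details (the reduction via $N_jM_{\sigma_j}\in\langle\seq{M}{\sigma_0}{\sigma_{j-1}}\rangle$ and the $\gcd$ argument with $e_i=\gcd\{\seq{M}{\sigma_0}{\sigma_i}\}$, $N_i=e_{i-1}/e_i$) that the paper leaves implicit.
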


\begin{proof}
 Lemma~\ref{lemma:generation} implies that any element of the semigroup $S^{\R}_C$ 
 can be in a unique way represented as
 $k_0 M_{\sigma_0}+ k_1 M_{\sigma_1}+\ldots +
 k_g M_{\sigma_g}$ with $k_i\in \Z_{\ge0}$ for $i=0,\ldots,g$ and $k_i\le N_i$ for $i\ge 1$.
 This yields the statement
 (since $M_{\tau_i}=N_i M_{\sigma_i}$).
\end{proof}

\begin{proposition}\label{prop:dimensions}
Let an element $a\in S^{\R}_C$ 
be  less that $M_{\rho}=m_{\rho}$.
 Then one has $\dim(J(a)/J(a+1))=1$,
 $\dim(J(M_{\rho})/J(M_{\rho}+1))=2$
 (and therefore $J(M_{\rho})/J(M_{\rho}+1)\cong \C$).
\end{proposition}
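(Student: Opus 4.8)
Write-up proposal for Proposition~\ref{prop:dimensions}.

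The plan is to compute inside the complex graded pieces $J_{\C}(a)/J_{\C}(a+1)$, which for a plane curve valuation are at most one-dimensional over $\C$ (and nonzero exactly when $a\in S_C$), and to keep track of the real structure on them, using that $C$ and $\b{C}$ share their resolution up to the splitting divisor $E_\rho$. The intersection-number fact I would record first is: for every $h\in\OO_{\C^2,0}$ one has $\min\{\nu_C(h),m_\rho\}=\min\{\nu_{\b{C}}(h),m_\rho\}$. Indeed, $\nu_C(h)<m_\rho$ forces the strict transform of $\{h=0\}$ to miss the center $p_\rho$ of the blow-up producing $E_\rho$ (otherwise $\{h=0\}$ would pass through the whole chain of points leading to $p_\rho$, whence $\nu_C(h)\ge C\cdot\varphi_\rho=m_\rho$), so $\{h=0\}$ leaves the common chain of $C$ and $\b{C}$ strictly before $\rho$ and $\nu_C(h)=\nu_{\b{C}}(h)$. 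In particular $J_{\C}(a)=\{h:\nu_C(h)\ge a\}=\{h:\nu_{\b{C}}(h)\ge a\}$ for all $a\le m_\rho$, so this space, and $J_{\C}(a+1)$ when $a+1\le m_\rho$, are stable under complex conjugation $\sigma$.

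First claim. Let $a\in S_C^{\R}\subseteq S_C$ with $a<m_\rho$, so $J_{\C}(a)/J_{\C}(a+1)\cong\C$. Complex conjugation acts on it $\C$-antilinearly, and taking $\sigma$-invariants is exact over $\R$; hence the real quotient $J(a)/J(a+1)$, which by the discussion in Section~\ref{sec:Definitions} is a real subspace of $J_{\C}(a)/J_{\C}(a+1)$, is exactly the fixed subspace of a $\C$-antilinear involution on $\C$, that is, a real line. Thus $\dim\big(J(a)/J(a+1)\big)=1$.

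Second claim. Here $a=m_\rho=\nu_C(\varphi_\rho)\in S_C$, so $J_{\C}(m_\rho)/J_{\C}(m_\rho+1)\cong\C$, and it is enough to produce a real function of value $m_\rho$ whose class is not a real multiple of $[\varphi_\rho]$. Set $\varphi_0:=\varphi_\rho$ and choose $\varphi_1\in\EE_{\R^2,0}$ defining a curvette at $E_\rho$ at a real point of $E_\rho\cong\P^1$ different from the one of $\varphi_0$ (possible, as the real points form $\R\P^1$), and consider the real pencil $h_c:=\varphi_0+c\varphi_1$, $c\in\R$. For all but finitely many $c$ the curve $\{h_c=0\}$ is again a curvette at $E_\rho$, so $\nu_C(h_c)=m_\rho$; and since the residue map $J_{\C}(m_\rho)\to J_{\C}(m_\rho)/J_{\C}(m_\rho+1)$ is $\C$-linear, $[h_c]=[\varphi_0]+c[\varphi_1]=(1+\kappa c)[\varphi_0]$ with $\kappa=[\varphi_1]/[\varphi_0]\in\C^{*}$. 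The point is that $\kappa\notin\R$: since $[\varphi_1-\kappa\varphi_0]=0$, the nonzero function $\varphi_1-\kappa\varphi_0$ (a multiple of $h_{-1/\kappa}$) has $\nu_C>m_\rho$, and the members of the pencil with value $>m_\rho$ are precisely those whose strict transform still passes through the chain up to $p_\rho$ and meets $E_\rho$ at the point $p^{C}$ of $E_\rho$ along which the resolution of $C$ proceeds past $E_\rho$ (or, if $\delta_C=\rho$, the point at which the strict transform of $C$ meets $E_\rho$). As $\varphi_0,\varphi_1$ are real and meet $E_\rho$ at real points, the correspondence between the pencil parameter $c$ and this point of $E_\rho$ is a M\"obius transformation defined over $\R$; but $p^{C}$ is not a real point of $E_\rho$, precisely because at the splitting divisor $C$ and $\b{C}$ proceed to two distinct complex-conjugate points $p^{C}\ne p^{\b{C}}$ of $E_\rho$. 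Hence the parameter value producing $p^{C}$ is non-real, so $\kappa$ is non-real, and, picking any suitable $c$ in $\R^{*}$, the real function $h_c$ has $\nu_C(h_c)=m_\rho$ and $[h_c]=(1+\kappa c)[\varphi_0]$ with $1+\kappa c\notin\R$. Together with $[\varphi_0]$ this gives two $\R$-independent elements of $J(m_\rho)/J(m_\rho+1)$, so $\dim\big(J(m_\rho)/J(m_\rho+1)\big)=2$, whence this space equals $J_{\C}(m_\rho)/J_{\C}(m_\rho+1)\cong\C$.

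The intersection-number identity of the first paragraph is routine given the descriptions of $\nu_C$ via the dual resolution graph cited before Lemma~\ref{lemma:generation}. I expect the main obstacle to be the geometric input in the second claim: identifying $[h_c]$ as an affine function of the pencil parameter and---above all---verifying that the pencil member with larger $\nu_C$-value corresponds to the point of $E_\rho$ along which $C$'s resolution continues, together with the observation that this point is non-real exactly because $\rho$ is the splitting point. Once that correspondence is set up over $\R$, the non-reality of $\kappa$, and hence the claim, is immediate.
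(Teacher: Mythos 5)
Your proof is correct; for the crucial second claim it runs on the same engine as the paper's, namely that a degree-one rational map to $E_\rho$ defined over $\R$ sends a non-real parameter value to the non-real point $p^{C}$ at which the resolution of $C$ continues past the splitting divisor, and that the resulting non-real constant propagates along the exceptional components after $\rho$ down to the class in $J_{\C}(m_\rho)/J_{\C}(m_\rho+1)$. The packaging differs in two places. For the first claim the paper argues geometrically: for real $f_1,f_2$ with $\nu_C(f_i)=a<m_\rho$ the liftings $f_i\circ\pi$ have equal multiplicities along every $E_\sigma$ with $\sigma\ge\rho$, so their ratio is a nonzero constant there, and this constant is real because its values on $E_\sigma$ and $E_{\b{\sigma}}$ are conjugate; you instead use descent for the antilinear conjugation involution on the one-dimensional space $J_{\C}(a)/J_{\C}(a+1)$, which needs (and you correctly supply) the conjugation-stability of $J_{\C}(a)$ for $a\le m_\rho$ via the identity $\min\{\nu_C(h),m_\rho\}=\min\{\nu_{\b{C}}(h),m_\rho\}$ --- a cleaner and equally valid route. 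For the second claim the paper uses one real function of value $m_\rho$ whose strict transform sits on components before $\rho$ and one real curvette at $E_\rho$ at infinity, computing the ratio on $E_\rho$ to be $cz$ with real $c$ and evaluating at the non-real center of the next blow-up; your pencil of two real curvettes at distinct real points of $E_\rho$ is an equivalent device. The only step you should pin down is the word ``precisely'' in your characterization of the pencil members with $\nu_C>m_\rho$: a priori the value could exceed $m_\rho$ for a reason other than the strict transform hitting $p^{C}$ (e.g.\ a degeneration along the chain of infinitely near points). This closes at once: since $\dim_{\C}\bigl(J_{\C}(m_\rho)/J_{\C}(m_\rho+1)\bigr)=1$ there is exactly one parameter $c^*$ with $\nu_C(h_{c^*})>m_\rho$, and since $c\mapsto P(c)$ is a bijection there is exactly one $c$ with $P(c)=p^{C}$; as the latter condition implies the former (an additional common infinitely near point strictly increases the intersection number), the two parameters must coincide, and your real M\"obius argument then gives $c^*\notin\R$ as claimed.
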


\begin{proof}
 Let $a=v_C(f_1)=v_C(f_2)$ with $f_i\in\EE_{\R^2,0}$. Since $a<m_{\rho}$,
 the strict transforms of the curves $\{f_i=0\}$
 intersect the exceptional divisor $D$ only at components
 preceeding $E_{\rho}$.
 One has $v_C(f_1)=v_C(f_2)< m_{\rho}$.
 The multiplicities of the liftings
 $f_1\circ\pi$ and $f_2\circ\pi$ of the germs $f_1$ and $f_2$
 along the components $E_{\sigma}$ with $\sigma\ge\rho$ are the same. Therefore on all these components the ratio $\frac{f_1\circ\pi}{f_1\circ\pi}$ is a constant different
 from zero (and from infinity). Since its value at the components $E_{\sigma}$ and $E_{\overline{\sigma}}$
 are conjugate to each other, this ratio is real.
 Therefore $\dim(J(a)/J(a+1))=1$.

 One has $m_{\rho}=\sum\limits_{i: \sigma_i<\rho}k_i m_{\sigma_i}$ with $k_i\ge 0$.
 Let us take a real function $f_1$ such that the strict transform of the curve $\{f_1=0\}$ is the union of $k_i$
 real curvettes at the components $E_{\sigma_i}$.
 Let $\pi_{\rho}:(X_\rho,D_\rho)\to(\C^2,0)$ be the modification of
 $(\C^2,0)$ produced in the course of the resolution $\pi$
 up to the moment when the component $E_{\rho}$ is created.
 Let $z$ be an affine coordinate on $E_{\rho}$ with real
 values on the real part of $E_{\rho}$ and equal to zero
 at the intersection point of $E_{\rho}$ with the previous
 component. Let $f_2$ be a real function such that the
 strict transform of $\{f_2=0\}$ is a real curvette
 at $E_{\rho}$ at infinity. The components $E_{\sigma}$ with
 $\sigma>\rho$ are obtained after blow ups the component
 $E_{\rho}$ at non-real points $z_0$ and $\overline{z_0}$.
 The ratio $\psi=\frac{f_1\circ\pi}{f_2\circ\pi}$
 on the component $E_{\rho}$ is equal to $cz$ with
 real $c$. (It has a zero at $z=0$ and a pole at $z=\infty$.)
 Therefore its value at the point $z_0$ (from which the component intersecting the strict transform of the curve $C$ emerges) is not real. The ratio $\psi$ is equal to this
 constant on all the components from the corresponding
 connected component of the part of the resolution graph
 consisting of the verticies $\sigma>\rho$.
 Therefore
 the ratio $\frac{f_1\circ \pi}{f_2\circ \pi}$ on the curve $C$ tends to a
 non-real (non-zero) number at the origin. This implies that
 $J(m_{\rho})/J(m_{\rho}+1)\cong \C$ and
 $\dim J(m_{\rho})/J(m_{\rho}+1)=2$.
\end{proof}

\begin{lemma}\label{lemma:geodesic}
 Let $\delta$ be a vertex of the resolution graph $\Gamma$
 lying on the geodesic from $\rho$ to $\delta_C$.
 Them $M_\delta\in m_\rho+ S^{\R}_{C}$.
\end{lemma}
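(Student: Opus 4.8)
The plan is to reduce the statement to the case analysis already carried out in the proof of Lemma~\ref{lemma:generation}. If $\delta=\rho$ there is nothing to prove, since $M_\rho=m_\rho=m_\rho+0\in m_\rho+S^{\R}_C$. So assume $\delta\neq\rho$; then $\delta$ lies strictly beyond $\rho$ on the $C$-part of $\Gamma$ (recall that $\delta_C$ is $\tau_g$ or $\rho$, so the geodesic $[\rho,\delta_C]$ lies in that part), hence $E_{\overline{\delta}}\neq E_\delta$ and $M_\delta=m_\delta+m_{\overline{\delta}}$. Fix a curvette $\varphi_\delta$ at $E_\delta$ and put $g=\varphi_\delta\cdot\overline{\varphi_\delta}\in\EE_{\R^2,0}$ (irreducible over $\R$), so that $\nu_C(g)=\nu_C(\varphi_\delta)+\nu_C(\overline{\varphi_\delta})=m_\delta+m_{\overline{\delta}}=M_\delta$. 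Now the geodesic from $\overline{\delta}$ to $\delta_C$ runs $\overline{\delta}\to\rho\to\delta_C$, i.e.\ $\rho$ is the splitting point of $\overline{\delta}$ and $C$; exactly as in the step of the proof of Lemma~\ref{lemma:generation} that computes $\nu_C(\overline{f})$ (and using \cite{Del-Lille,Casas,Wall}), this gives $m_{\overline{\delta}}=\nu_C(\overline{\varphi_\delta})=e_\rho(\varphi_\delta)\,m_\rho$, and moreover $e_\rho(\varphi_\delta)\ge 1$, because $\delta$ lies strictly beyond $\rho$ and hence the strict transform of $\varphi_\delta$ meets $E_\rho$ on $X_\rho$.

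Consequently it is enough to write $M_\delta=s+a\,m_\rho$ with $s\in S^{\R}_C$ and $a\ge 1$: then $M_\delta-m_\rho=s+(a-1)m_\rho\in S^{\R}_C$, since $m_\rho\in\langle M_{\sigma_0},\ldots,M_{\sigma_q}\rangle\subseteq S^{\R}_C$ (established in the proof of Lemma~\ref{lemma:generation}) and $S^{\R}_C$ is closed under addition; together with $M_\rho=m_\rho$ this gives $M_\delta\in m_\rho+S^{\R}_C$. To obtain such a decomposition I apply the computations of the proof of Lemma~\ref{lemma:generation} to $g=\varphi_\delta\,\overline{\varphi_\delta}$ (with $F=\{\varphi_\delta=0\}$ meeting $E_\delta$), according to where $\delta$ sits on $[\rho,\delta_C]$. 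If $\delta\in[\rho,\tau_{q+1}]$, then $m_\delta=\nu_C(\varphi_\delta)\in\langle m_{\sigma_0},\ldots,m_{\sigma_q}\rangle=\langle M_{\sigma_0},\ldots,M_{\sigma_q}\rangle\subseteq S^{\R}_C$, and we may take $s=m_\delta$, $a=e_\rho(\varphi_\delta)\ge 1$. If $\delta\in[\tau_i,\tau_{i+1}]$ for some $i$ with $q+1\le i\le g-1$ (together with the previous case this exhausts $[\rho,\delta_C]\setminus\{\rho\}$), the final case of that proof, applied to $g$, writes $m_\delta=\sum_{j=0}^{i}k_j m_{\sigma_j}$ with $0\le k_j<N_j$ for $j\ge 1$ and yields
$$
M_\delta=\sum_{j=0}^{i}k_j M_{\sigma_j}+a\,m_\rho ,\qquad a=e_\rho(\varphi_\delta)-\sum_{j=q+1}^{i}k_j\,N_{q+1}\cdots N_{j-1};
$$
the strict inequality $\sum_{j=q+1}^{i}k_j\,m_{\overline{\sigma}_j}<(N_{q+1}\cdots N_i)\,m_\rho\le\nu_C(\overline{\varphi_\delta})$ proved there forces $a\ge 1$, and we take $s=\sum_{j=0}^{i}k_j M_{\sigma_j}\in S^{\R}_C$.

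The argument is mostly bookkeeping: identifying which segment of the geodesic contains $\delta$ and quoting the matching identity from the proof of Lemma~\ref{lemma:generation}. The one genuinely load-bearing ingredient is the strict inequality recalled above (already established in that proof), which is precisely what forces $a\ge 1$, so that one copy of $m_\rho$ can be removed while staying inside $S^{\R}_C$; I do not expect any further obstacle.
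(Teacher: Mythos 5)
Your proposal is correct and follows essentially the same route as the paper: take a curvette $\varphi_\delta$ at $E_\delta$, note $M_\delta=\nu_C(\varphi_\delta)+\nu_C(\overline{\varphi_\delta})$, and invoke the case analysis at the end of the proof of Lemma~\ref{lemma:generation} to write $M_\delta=\sum_j k_j M_{\sigma_j}+a\,m_\rho$ with $a\ge 1$, the strict inequality established there being exactly what guarantees $a>0$. Your treatment is somewhat more explicit than the paper's (the trivial case $\delta=\rho$ and the verification $e_\rho(\varphi_\delta)\ge 1$), but the substance is identical.
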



\begin{proof}
Let $\varphi\in \OO_{\C^2,0}$ be the equation of a curvette at the point $\delta$ and
let us assume that either  $\delta\in [\tau_i, \tau_{i+1}]$, for some $i>q+1$ or
$\delta\in [\rho, \tau_{q+1}]$.
Then, $M_{\delta} = \nu_{C}(\varphi)+\nu_{C}(\b{\varphi})$ and by the last part in
the proof of Lemma~\ref{lemma:generation}, one has that
$M_{\delta} = \sum_{0}^i k_j M_{\sigma_j} + a m_{\rho}$ for some $a>0$. Thus
$M_{\delta}-m_\rho\in S^{\R}_{C}$.
\end{proof}

\begin{proposition}
 Let $S_2$ be the set of elements $a$ of the semigroup $S_C^{\R}$
 such that $\dim(J(a)/J(a+1)=2$. Then
 $S_2=m_{\rho}+S_C^{\R}$\,.
\end{proposition}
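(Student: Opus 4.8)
The plan is to prove the two inclusions $m_\rho+S_C^{\R}\subseteq S_2$ and $S_2\subseteq m_\rho+S_C^{\R}$ separately, working with the description $\dim_{\R}\big(J(a)/J(a+1)\big)=\dim_{\R}V_a$, where $V_a\subseteq\C$ is the $\R$-span of the leading coefficients along the branch $C$ (for a fixed parametrization) of the real germs $f$ with $\nu_C(f)=a$. This is legitimate because $S_C^{\R}\subseteq S_C$, so $J_{\C}(a)/J_{\C}(a+1)\cong\C$ for $a\in S_C^{\R}$ and $V_a$ is the $\R$-subspace it cuts out; thus $a\in S_2$ exactly when $V_a=\C$. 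Note also that $m_\rho\in S_C^{\R}$ by Proposition \ref{prop:dimensions}, so $m_\rho+S_C^{\R}\subseteq S_C^{\R}$ and the statement makes sense.

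For $m_\rho+S_C^{\R}\subseteq S_2$: given $a=m_\rho+b$ with $b\in S_C^{\R}$, choose a real $h$ with $\nu_C(h)=b$. Multiplication by $h$ is an $\R$-linear map $J(m_\rho)/J(m_\rho+1)\to J(a)/J(a+1)$, and it is injective, since $hf\in J(a+1)$ forces $\nu_C(f)=\nu_C(hf)-b\ge m_\rho+1$ (here we use that $\nu_C$ is a valuation and $\nu_C(h)=b$ is finite). By Proposition \ref{prop:dimensions} the source is two-dimensional over $\R$, hence so is the target, i.e.\ $a\in S_2$.

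For $S_2\subseteq m_\rho+S_C^{\R}$: let $a\in S_2$. By Proposition \ref{prop:dimensions} we have $a\ge m_\rho$, and $a=m_\rho$ is trivial, so assume $a>m_\rho$ and $V_a=\C$. Running the analysis in the proof of Proposition \ref{prop:dimensions}, the leading coefficient along $C$ of a real germ depends only on the mutual position, at the last common vertex with $\w C$, of its branches relative to the strict transform of $C$; if every branch of $\{f=0\}$ leaves the geodesic of $C$ either strictly before $\rho$ (where $C=\b C$, so a conjugate pair contributes a positive real factor) or in a ``dead-end direction'' (a fixed point of the relevant exceptional component), this leading coefficient is a rigid datum lying on a single fixed real line. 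As such germs alone cannot span $V_a=\C$, there is a real germ $f$ with $\nu_C(f)=a$ containing a branch $F$ that separates from $C$ at a vertex $\delta$ of the geodesic $[\rho,\delta_C]$ by moving to a free point of $E_\delta$. Since $f$ is real, $\b F\subseteq\{f=0\}$ as well, and we may write $f=q\,\b q\cdot h$ with $q$ an equation of $F$ and $h$ real. Then $\nu_C(h)\in S_C^{\R}$, while $\nu_C(q)+\nu_C(\b q)\in m_\rho+S_C^{\R}$: for $\delta=\rho$ this quantity is $m_\rho$ or $2m_\rho$; for $\delta>\rho$ and $F$ a curvette it equals $M_\delta$, which lies in $m_\rho+S_C^{\R}$ by Lemma \ref{lemma:geodesic}; and in general it has the form $\sum_j k_j M_{\sigma_j}+a'm_\rho$ with $a'\ge1$ by the computation in the proof of Lemma \ref{lemma:generation}, again in $m_\rho+S_C^{\R}$. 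Hence $a=\nu_C(h)+\nu_C(q)+\nu_C(\b q)\in S_C^{\R}+\big(m_\rho+S_C^{\R}\big)=m_\rho+S_C^{\R}$.

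The \emph{main obstacle} is the step in the second inclusion asserting that a two-dimensional fibre forces a witness with a branch that separates from $C$ ``freely'' at a geodesic vertex $\delta\in[\rho,\delta_C]$. This requires making precise, in the spirit of Proposition \ref{prop:dimensions}, the dichotomy between \emph{rigid} separations --- before $\rho$ (symmetric under conjugation), or toward a dead end such as $\sigma_i$ with $i>q$, which is precisely why $M_{\sigma_i}\notin m_\rho+S_C^{\R}$ must go hand in hand with $M_{\sigma_i}\notin S_2$ --- and \emph{free} separations at a vertex of $[\rho,\delta_C]$, where the strict transform of $C$ is still moving and the leading coefficient along $C$ sweeps out all of $\C$. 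Once this is established, what remains is the bookkeeping of values already carried out in the proofs of Lemmas \ref{lemma:generation} and \ref{lemma:geodesic}.
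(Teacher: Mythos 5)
Your first inclusion ($m_\rho+S_C^{\R}\subseteq S_2$ via multiplication by a real germ of value $b$) is correct and is the paper's argument. For the reverse inclusion your overall strategy also matches the paper's, but there is a genuine gap at exactly the point you flag as the ``main obstacle'': the assertion that every real witness of $a$ all of whose branches separate from $C$ either before $\rho$ or toward a dead end has leading coefficient along $C$ on one and the same real line. As stated this is not yet an argument: for two such witnesses $f$ and $g$ the decompositions $f=f'\cdot\prod_i f_i'\b{f_i'}$, $g=g'\cdot\prod_i g_i'\b{g_i'}$ into conjugate pairs of ``tail'' branches could a priori involve different multiplicities $k_i$, and then nothing forces the two leading coefficients to be $\R$-proportional. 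The paper closes precisely this step, and does so by contraposition, which is what makes it run smoothly: assuming $a\notin m_\rho+S_C^{\R}$, Lemma~\ref{lemma:geodesic} both rules out any branch meeting the geodesic $[\rho,\delta_C]$ and forces $k_i<N_i$ (since $N_iM_{\sigma_i}=M_{\tau_i}$ lies in $m_\rho+S_C^{\R}$); by uniqueness of the representation of $a$ in terms of the minimal generators $M_{\sigma_j}$ the $k_i$ are then the same for any two witnesses, so the ratio $\frac{g\circ\pi}{f\circ\pi}$ is a nonzero constant on $[\rho,\delta_C]\cup[\rho,\b{\delta}_C]$, real because that union is conjugation-invariant. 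This is the actual content of your ``rigidity'' claim; it is the heart of the proof, not bookkeeping, and it is missing from your write-up.

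Note also that in your direct formulation (taking $a\in S_2$ rather than $a\notin m_\rho+S_C^{\R}$) the rigidity claim is harder to even state correctly: without the hypothesis $a\notin m_\rho+S_C^{\R}$ the multiplicities $k_i$ need not satisfy $k_i<N_i$ and the decomposition of $a$ need not be unique, so you would first have to dispose separately of the case $k_i\ge N_i$ (which already yields $a\in m_\rho+S_C^{\R}$ via $M_{\tau_i}$). Two smaller points: a branch meeting $E_\rho$ at a real free point may satisfy $F=\b{F}$, so the factorization $f=q\,\b{q}\,h$ needs adjusting there; and your value bookkeeping for a branch separating on the geodesic, namely $\nu_C(q)+\nu_C(\b{q})\in m_\rho+S_C^{\R}$, is correct and is indeed Lemma~\ref{lemma:geodesic} together with the computation in the proof of Lemma~\ref{lemma:generation}.
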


\begin{proof}
 If $a= m_\rho+b$ with $b\in S^{\R}_{C}$, then obviously $a\in S_2$
 (since $m_\rho\in S_2$).

 Assume that $a\notin m_\rho+S^{\R}_{C}$ and $\dim(J(a)/J(a+1))=2$.
 Let $f$ be a real function with $v_C(f)=a$, and let the strict
 transform of the curve $\{f=0\}$ intersect the exceptional
 divisor $D$ at points of the subset $\Gamma'$ of $\Gamma$.
 The subset $\Gamma'$ cannot contain a vertex from the geodesic from $\rho$ to
$\delta_C$ (due to Lemma~\ref{lemma:geodesic}).
 One has
$$f = f'\cdot \prod_{i:\b{\sigma}_i\neq \sigma_i} f'_i \b{f'_i}$$
where $f'$ is real function such that the strict transform of the curve $\{f'=0\}$
intersects the exceptional divisor $D$ only at points of components $E_{\delta}$ with
$\b{\delta}=\delta$, $f'_i$ is a (complex analytic) function such that the strict
transform of the curve $\{f'_i=0\}$ intersects only at points of the components
$E_\delta$ from the tail containing $E_{\sigma_i}$. One has
$\nu_C(f'_i \b{f'_i}) = k_i M_{\sigma_i}$. If $k_i\ge N_i$ then
$\nu_C(f'_i \b{f'_i})\in m_\rho+S$ (since $N_i M_{\sigma_i}=M_{\tau_i}$ and
Lemma\ref{lemma:geodesic}). Thus $k_i<N_i$.

If $g$ is another real function with $\nu_C(g)=a$ and
$g = g'\cdot \prod_{i:\b{\sigma}_i\neq \sigma_i} g'_i \b{g'_i}$, then
$\nu_C(g'_i\b{g'_i}) = \nu_C(f'_i\b{f'_i})$ (because of the uniqueness of the
representation in terms of the minimal set of generators
$\{\seq{M}{\sigma_0}{\sigma_g}\}$). This implies that
$\frac{g\circ \pi}{f\circ \pi}$ is constant on the union of the two geodesics from
$\rho$ to $\delta_C$ and $\b{\delta}_C$ ($\rho$ included). Since its values at
complex conjugate points are complex conjugate to each other, this constant is real.
Therefore the element of the extended semigroup defined by $f$ and $g$ are linear
dependent over $\R$. This contradicts the assumption that
$\dim (J(a)/J(a+1)) =2$.
\end{proof}

\begin{theorem}\label{theorem2}
One has:
\begin{align}
P_{\nu_C}(t) & = P^S_{\nu_C}(t)(1+t^{m_\rho}) = P^S_{\nu_C}(t)
\frac{1-t^{2m_\rho}}{1-t^{m_\rho}}\,; \label{eqn:dim-Poincare}\\
P^{\R}_{\nu_C}(t) &= P^S_{\nu_C}(t)(1-t^{m\rho})\nonumber\; .
\end{align}
\end{theorem}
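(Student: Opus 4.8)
The plan is to reduce both identities to the dimension count already carried out in Proposition~\ref{prop:dimensions} and in the Proposition describing the set $S_2$, and then to do a short bookkeeping of generating series. First I would observe that for a single valuation Equation~(\ref{eqn:Poincare_ini}) degenerates to $P_{\nu_C}(t)=\LL_{\nu_C}(t)=\sum_{a\ge 0}\dim_{\R}\bigl(J(a)/J(a+1)\bigr)\,t^{a}$, the sum being supported on $\Z_{\ge 0}$ since $\nu_C\ge 0$ on $\EE_{\R^2,0}$. The coefficient $\dim_{\R}(J(a)/J(a+1))$ is $0$ unless $a\in S_C^{\R}$; by the Proposition describing $S_2$ it equals $2$ exactly when $a\in S_2=m_{\rho}+S_C^{\R}$, and it equals $1$ for $a\in S_C^{\R}\setminus S_2$ by Proposition~\ref{prop:dimensions}. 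I would also record that $m_{\rho}=M_{\rho}\in\langle M_{\sigma_0},\ldots,M_{\sigma_q}\rangle\subseteq S_C^{\R}$, as shown inside the proof of Lemma~\ref{lemma:generation}; in particular $m_{\rho}+S_C^{\R}\subseteq S_C^{\R}$, so translating by $m_\rho$ introduces no cancellation.

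Next I would split the series according to these three cases:
$$
P_{\nu_C}(t)=\sum_{a\in S_C^{\R}}t^{a}+\sum_{a\in m_{\rho}+S_C^{\R}}t^{a}.
$$
The first sum is $P^S_{\nu_C}(t)$ by the very definition of the semigroup Poincar\'e series. Since $b\mapsto m_{\rho}+b$ is a bijection of $S_C^{\R}$ onto $m_{\rho}+S_C^{\R}$ (the monoid $\Z_{\ge 0}$ being cancellative), the second sum equals $t^{m_{\rho}}P^S_{\nu_C}(t)$. Hence $P_{\nu_C}(t)=P^S_{\nu_C}(t)(1+t^{m_{\rho}})$, and rewriting $1+t^{m_{\rho}}=(1-t^{2m_{\rho}})/(1-t^{m_{\rho}})$ gives the first line of the Theorem.

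For $P^{\R}_{\nu_C}(t)$ I would use the recipe recorded in item~2) of Section~\ref{sec:Definitions}: this series is obtained from $P_{\nu_C}(t)$ by replacing every monomial with coefficient $2$ by $0$. Indeed, for one valuation the projectivized fibre $\P F_{a}=\bigl(J(a)/J(a+1)\setminus\{0\}\bigr)/\R^{*}$ is a single point when $\dim_{\R}J(a)/J(a+1)=1$ (so $\chi=1$), is $\R\P^{1}$ when the dimension is $2$ (so $\chi=0$), and is empty otherwise (so $\chi=0$). Therefore
$$
P^{\R}_{\nu_C}(t)=P_{\nu_C}(t)-2\sum_{a\in S_2}t^{a}=P^S_{\nu_C}(t)(1+t^{m_{\rho}})-2t^{m_{\rho}}P^S_{\nu_C}(t)=P^S_{\nu_C}(t)(1-t^{m_{\rho}}),
$$
as claimed.

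The substantive work has all been done in the preceding propositions, so no genuine obstacle is expected here; the only points requiring care are that for $r=1$ the series~(\ref{eqn:Poincare_ini}) is literally $\LL_{\nu_C}(t)$ and supported on $\Z_{\ge 0}$, that $m_{\rho}$ indeed lies in $S_C^{\R}$ so that $m_{\rho}+S_C^{\R}$ is an honest subset of $S_C^{\R}$, and that $\chi(\P F_{a})$ is evaluated correctly in the three cases above. If anything is delicate it is purely the translation between the ``$\dim$'' description of the coefficients of $P_{\nu_C}$ and the ``$\chi$'' description entering $P^{\R}_{\nu_C}$.
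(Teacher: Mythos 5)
Your proposal is correct and follows essentially the same route as the paper: both decompose $P_{\nu_C}(t)$ as $P^S_{\nu_C}(t)+\sum_{a\in S_2}t^a$ using the identification $S_2=m_\rho+S_C^{\R}$, and obtain $P^{\R}_{\nu_C}(t)$ by subtracting (rather than adding) the $S_2$-contribution. Your extra remarks on the bijectivity of translation by $m_\rho$ and on $\chi(\R\P^1)=0$ merely make explicit what the paper leaves implicit.
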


\begin{proof}
One has
$$
P_{\nu_C}(t) = P^S_{\nu_C}(t) + \sum_{a\in S_2} t^a = P^S_{\nu_C}(t) +\sum_{v\in S}t^{m_\rho +v} =
P^S_{\nu_C}(t) + t^{m_\rho}P^S_{\nu_C}(t)\;;
$$
$$
P^{\R}_{\nu_C}(t) = P^S_{\nu_C}(t)- \sum_{a\in S_2} t^a = P^S_{\nu_C}(t) - \sum_{v\in S}t^{m_\rho +v} =
P^S_{\nu_C}(t) - t^{m_\rho}P^S_{\nu_C}(t)\; .
$$
\end{proof}

\begin{remark}
Assume that the group $\Z_2$ of order $2$ acts on $(\C^2,0)$ and
$(C',0)\subset (\C^2,0)$ is a complex curve such that the dual graph of its
equivariant resolution (together with the ages of the verticies) coincides with the
resolution graph $\Gamma$. In \cite{MMJ}, there was defined the equivariant
Poincar\'e series $P^{\Z_2}_{C'}(t)$ as an element of
$R_1(\Z_2)[[t]]$ where $R_1(\Z_2)= \Z[\sigma]/(\sigma^2-1)$ is the ring of
representations of $\Z_2$. One has its reduction under the dimensional homomorphism
$red: R_1(\Z_2)\to \Z$ as an element
$red\, P^{\Z_2}_{C'}(t)\in \Z[[t]]$. Equation \ref{eqn:dim-Poincare} and the equation of
\cite[Theorem 2]{MMJ} imply that
$P_{\nu_C}(t)=red\, P^{\Z_2}_{C'}(t)$. We have no independent explanation of this
coincidence.
\end{remark}

\begin{remark}
 As it was indicated in Remark~\ref{rem1}, the semigroup $S_C^{\R}$ is the
semigroup of values of a complex plane curve $C'$. Therefore its elements are
symmetric with respect to the conductor $c$ of the semigroup in the sense that
$a\in S \iff c-1-a\notin S$. One has one more symmetry concerning the coefficients of
the series $P_{\nu_C}(t)$. This symmetry is with respect to $c+m_\rho$:
$$
\dim \left({J(a)}\left/{J(a+1)}\right.\right) + \dim\left( {J(c+m_\rho-1-a)}\left/{J(c+m_\rho-a)}\right.\right) = 2\;
$$
(cf.~\cite{CDK}).
(Thus if one of these dimensions is  equal to zero, the other one is equal to $2$,
and if one them is equal to $1$, then the other one is equal to $1$ as well).
\end{remark}


\begin{remark}
 From Remark~\ref{rem1}, one can easily see that the semigroup Poincar\'e series
$P^S_{\nu_C}(t)$ does not determine the real topology of the curve $C$, i.\,e., the topology of the curve
 $C\cup \overline{C}$.
 On the other hand each of the series $P_{\nu_C}(t)$ and $P^{\R}_{\nu_C}(t)$
 permit to determine not only the semigroup Poincar\'e series $P^S_{\nu_C}(t)$, but also the value $m_{\rho}$
 and thus the splitting point $\rho$. Therefore each of these series determines the real topology of the curve $C$.
\end{remark}

Addresses:

A. Campillo and F. Delgado:
IMUVA (Instituto de Investigaci\'on en
Matem\'aticas), Universidad de Valladolid,
Paseo de Bel\'en, 7, 47011 Valladolid, Spain.
\newline E-mail: campillo\symbol{'100}agt.uva.es, fdelgado\symbol{'100}uva.es

S.M. Gusein-Zade:
Moscow State University, Faculty of Mathematics and Mechanics, Moscow Center for
Fundamental and Applied Mathematics, Moscow, Leninskie Gory 1, GSP-1, 119991, Russia.\\
\& National Research University ``Higher School of Economics'',
Usacheva street 6, Moscow, 119048, Russia.
\newline E-mail: sabir\symbol{'100}mccme.ru

\end{document}